\theoremstyle{plain}
\newtheorem{theorem}{Теорема}[section]
\newtheorem{lemma}{Лемма}[section]
\newtheorem{corollary}{Следствие}[section]
\theoremstyle{definition}
\newtheorem{example}{Пример}[section]
\newcommand{\keywords}{\textbf{Ключевые слова. }}
\newcommand{\subjclass}{\textbf{MSC 2010. }}
\renewcommand{\abstract}{\textbf{Аннотация. }}
\def\Re{\mathop\mathrm{Re}}\,
\def\Im{\mathop\mathrm{Im}}\,
\numberwithin{equation}{section}
\begin{document}

\title{О моногенных отображениях кватернионной переменной\footnote{Работа выполнена при поддержке Министерства образования и науки Украины (проект ''Моногенные функции в банаховых алгебрах и краевые задачи анализа и математической физики'').}}

\author{Виталий С. Шпаковский и Татьяна С. Кузьменко}



\date{}

\maketitle

\begin{abstract}
В работе \cite{Shpakivskiy-Kuzmenko} рассмотрен класс,
 так называемых, $G$-моногенных (дифференцируемых по Гато) кватернионных отображений.
  В этой работе введены кватернионные $H$-моногенные (дифференцируемые по Хаусдорфу) отображения и установлена связь между $G$-моногенными и $H$-моногенными отображениями. Доказана эквивалентность разных определений $G$-моногенного отображения.
\end{abstract}

\subjclass{30G35, 57R35}

\keywords{Алгебра комплексных кватернионов, $G$-моногенные отображения, теорема Морера, $H$-моногенные отображения.}

\section{Введение}

Проблеме определения аналитической функции в ассоциативных (коммутативных или некоммутативных) алгебрах посвящено много работ  (см., например, \cite{Shpakivskiy-Kuzmenko} --- \cite{Dzagnidze}). В частности, в работах  \cite{Sinkov} --- \cite{Dzagnidze} указанная проблема рассматривается в алгебре кватернионов.

В то же время в кватернионном анализе
осталось незамеченным определение аналитической функции по Хаусдорфу ($H$-аналитической) \cite{Hausdorff}, не смотря на то, что в работах \cite{Ringleb}, \cite{Volov-1948} --- \cite{Rinehart} предпринимались некоторые попытки построения теории $H$-аналитических функций в общей ассоциативной алгебре.

Так, Ф.~Ринглеб в работе \cite{Ringleb} развивает теорию $H$-аналитических функций в произвольной
 конечномерной полупростой (т.~е., являющейся прямой суммой простых подалгебр) алгебре над полем действительных чисел $\mathbb{R}$. При этом он рассматривает функции, определенные и принимающие значения во всей алгебре.

Развивая идеи Хаусдорфа, С.~Воловельская в работе \cite{Volov-1948} определяет $H$-аналитические функции в области алгебры из некоторого класса конечномерных неполупростых алгебр над полем $\mathbb{R}$ и описывает общий вид таких функций.

В заметке М.~Дегтеревой \cite{Dehtereva} показано, что в коммутативной алгебре над $\mathbb{R}$ дифференцируемость по Хаусдорфу совпадает с дифференцируемостью по Шефферсу (см. \cite{Scheffers}).
В.~Портман \cite{Portman} определяет производную от $H$-аналитической функции в ассоциативных алгебрах над полем комплексных чисел $\mathbb{C}$ и исследует вопрос о ее соотношении с некоторыми другими определениями производной.

В работе Р.~Ринехарта и Дж.~Вилсона \cite{Rinehart} вводится класс функций, в некотором смысле дифференцируемых в любой ассоциативной алгебре над полем $\mathbb{R}$ или $\mathbb{C}$, и  изучается вопрос о соотношении между этими функциями и $H$-аналитическими функциями на различных классах алгебр.

В нашей работе \cite{Shpakivskiy-Kuzmenko} в алгебре комплексных кватернионов был
определен класс $G$-моногенных (дифференцируемых по Гато) отображений.
Там же установлено конструктивное описание всех отображений из этого класса с помощью четырех аналитических функций комплексной переменной. В работе \cite{Shp-Kuzm-intteor} для $G$-моногенных отображений доказаны аналоги интегральной теоремы Коши для криволинейного и поверхностного интеграла и интегральной формулы Коши. Кроме того, в статье \cite{Kuzm-series}
получены разложения $G$-моногенных отображений в ряды Тейлора и Лорана, а также проведено классификацию особых точек рассматриваемых отображений.

В этой работе мы вводим класс $H$-моногенных (дифференцируемых по Хаусдорфу) отображений в алгебре комплексных кватернионов и устанавливаем связь между $G$-моногенными и $H$-моногенными отображениями. Кроме того, доказывается теорема об эквивалентности разных определений $G$-моногенного отображения.

\section{Алгебра комплексных кватернионов}

Пусть $\mathbb{H(C)}$ --- алгебра кватернионов над полем комплексных чисел $\mathbb{C}$, базис которой состоит из единицы алгебры $1$ и элементов $I,J,K$, для которых выполняются правила умножения:
$$I^2=J^2=K^2=-1,$$
$$IJ=-JI=K,\qquad JK=-KJ=I,\qquad KI=-IK=J.$$

В алгебре $\mathbb{H(C)}$ существует другой базис $\{e_1,e_2,e_3,e_4\}$:
$$e_1=\frac{1}{2}(1+iI), \quad e_2=\frac{1}{2}(1-iI), \quad
e_3=\frac{1}{2}(iJ-K), \quad e_4=\frac{1}{2}(iJ+K),$$
где $i$ --- мнимая комплексная единица. Таблица умножения в новом базисе принимает следующий вид (см., например, \cite{Cartan})
$$\begin{tabular}{c||c|c|c|c|}
$\cdot$ & $e_1$ & $e_2$ & $e_3$ & $e_4$\\
\hline
\hline
$e_1$ & $e_1$ & $0$ & $e_3$ & $0$\\
\hline
$e_2$ & $0$ & $e_2$ & $0$ & $e_4$\\
\hline
$e_3$ & $0$ & $e_3$ & $0$ & $e_1$\\
\hline
$e_4$ & $e_4$ & $0$ & $e_2$ & $0$\\
\hline
\end{tabular}\,\,,$$
при этом единица алгебры представляется в виде $1=e_1+e_2$. Очевидно, что коммутативная подалгебра с базисом $\{e_1,e_2\}$ является алгеброй бикомплексных чисел или алгеброй коммутативних кватернионов Сегре \cite{Segre}.

Напомним (см., например, \cite[c. 64]{van_der_Varden}), что  подмножество
$\mathcal{I}\subset\mathbb{H(C)}$
называется \textit{левым} (или \textit{правым}) \textit{идеалом}, если из
 условия $x\in\mathcal{I}$
следует $yx\in\mathcal{I}$\, (или $xy\in\mathcal{I}$) для любого $y\in\mathbb{H(C)}$.

Теперь отметим, что алгебра $\mathbb{H(C)}$ содержит два правых максимальных идеала
$$\mathcal{I}_1:=\{\lambda_2e_2+\lambda_4e_4:\lambda_2,\lambda_4\in\mathbb{C}\},\qquad
\mathcal{I}_2:=\{\lambda_1e_1+\lambda_3e_3:\lambda_1,\lambda_3\in\mathbb{C}\}$$
и два левых максимальных идеала
$$\mathcal{\widehat{I}}_1:=\{\lambda_2e_2+\lambda_3e_3:\lambda_2,\lambda_3\in
\mathbb{C}\},\qquad
\mathcal{\widehat{I}}_2:=\{\lambda_1e_1+\lambda_4e_4:\lambda_1,\lambda_4\in\mathbb{C}\}.$$

Следствием очевидных равенств $$\mathcal{I}_1\cap\mathcal{I}_2=\widehat{\mathcal{I}}_1\cap\widehat{\mathcal{I}}_2=
0,\qquad \mathcal{I}_1\cup\mathcal{I}_2=\widehat{\mathcal{I}}_1\cup
\widehat{\mathcal{I}}_2=\mathbb{H(C)}
$$
является разложение в прямую сумму:
$$\mathbb{H(C)}=\mathcal{I}_1\oplus\mathcal{I}_2=
\widehat{\mathcal{I}}_1\oplus\widehat{\mathcal{I}}_2.
$$

Определим линейные функционалы $f_1:\mathbb{H(C)\rightarrow\mathbb{C}}$ и $f_2:\mathbb{H(C)\rightarrow\mathbb{C}}$, полагая
$$f_1(e_1)=f_1(e_3)=1, \qquad f_1(e_2)=f_1(e_4)=0,$$
$$f_2(e_2)=f_2(e_4)=1, \qquad f_2(e_1)=f_2(e_3)=0,$$
при этом очевидно $f_1(\mathcal{I}_1)=f_2(\mathcal{I}_2)=0$.

Определим также линейные функционалы $\widehat{f}_1:\mathbb{H(C)\rightarrow\mathbb{C}}$ и $\widehat{f}_2:\mathbb{H(C)\rightarrow\mathbb{C}}$, полагая
$$\widehat{f}_1(e_1)=\widehat{f}_1(e_4)=1, \qquad \widehat{f}_1(e_2)=\widehat{f}_1(e_3)=0,$$
$$\widehat{f}_2(e_2)=\widehat{f}_2(e_3)=1, \qquad \widehat{f}_2(e_1)=\widehat{f}_2(e_4)=0,$$
для которых очевидно $\widehat{f}_1(\widehat{\mathcal{I}}_1)=
\widehat{f}_2(\widehat{\mathcal{I}}_2)=0$.

Отметим, что указанные функционалы являются непрерывными и в некотором смысле мультипликативными (см. \cite{Shpakivskiy-Kuzmenko}).

\section{$G$-моногенные отображения}

Пусть
\begin{equation}\label{vectors-i}
i_1=1, \quad i_2=a_1e_1+a_2e_2, \quad i_3=b_1e_1+b_2e_2
\end{equation}
при $a_k,b_k\in\mathbb{C},\,k=1,2$ --- тройка линейно независимых векторов над полем  $\mathbb{R}$. Это означает, что равенство
$$\alpha_1i_1+\alpha_2i_2+\alpha_3i_3=0, \qquad
\alpha_1,\alpha_2,\alpha_3\in\mathbb{R}$$ выполняется тогда и только тогда, когда
 $\alpha_1=\alpha_2=\alpha_3=0$.

 Выделим в алгебре $\mathbb{H(C)}$ линейную оболочку $E_3:=\{\zeta=xi_1+yi_2+zi_3:x,y,z\in\mathbb{R}\}$ над полем $\mathbb{R}$, порожденную векторами $i_1,i_2,i_3$. Введем обозначения
 $$\xi_1:=f_1(\zeta)=\widehat{f}_1(\zeta)=x+ya_1+zb_1,$$
$$\xi_2:=
f_2(\zeta)=\widehat{f}_2(\zeta)=x+ya_2+zb_2.$$
Теперь элемент $\zeta\in E_3$ может быть представлен в виде $\zeta=\xi_1e_1+\xi_2e_2$.

 Множеству $S\subset\mathbb{R}^3$ поставим в соответствие множество $S_\zeta:=
\{\zeta=xi_1+yi_2+zi_3:(x,y,z)\in S\}$ в $E_3$.

Пусть $\Omega$ --- область в $\mathbb{R}^3$. В работе \cite{Shpakivskiy-Kuzmenko} предложено следующее определение.

Непрерывное отображение $\Phi:\Omega_\zeta\rightarrow\mathbb{H(C)}$ (или $\widehat{\Phi}:\Omega_\zeta\rightarrow\mathbb{H(C)}$) называется \emph{право-$G$-моногенным}
\big(или  \emph{лево-$G$-моногенным}\big) в области
$\Omega_\zeta\subset E_3$,
если $\Phi$ \big(или  $\widehat{\Phi}$\big) дифференцируемо по Гато в каждой точке этой области, т.~е., если для каждого $\zeta\in
 \Omega_\zeta$ существует элемент $\Phi'(\zeta)$ \big(или $\widehat{\Phi}'(\zeta)$\big) алгебры $\mathbb{H(C)}$ такой, что выполняется равенство
\begin{equation}\label{ozn-r-G-monog}
\lim\limits_{\varepsilon\rightarrow 0+0}\Big(\Phi(\zeta+\varepsilon h)-\Phi(\zeta)\Big)\varepsilon^{-1}= h\Phi'(\zeta)\quad\forall\,h\in E_3
\end{equation}
$$\Biggr(\mbox{или}\,\, \lim\limits_{\varepsilon\rightarrow 0+0}
\left(\widehat{\Phi}(\zeta+\varepsilon h)-\widehat{\Phi}(\zeta)\right)
\varepsilon^{-1}= \widehat{\Phi}'(\zeta)h\quad\forall\,h\in E_3\Biggr).$$
При этом $\Phi'(\zeta)$ называется \emph{правой производной Гато} отображения $\Phi$, а $\widehat{\Phi}'(\zeta)$ --- \emph{левой производной Гато} отображения $\widehat{\Phi}$ в точке $\zeta$\,.

Рассмотрим разложение отображения $\Phi:\Omega_\zeta\rightarrow\mathbb{H(C)}$ по базису $\{e_1,e_2,e_3,e_4\}$:
\begin{equation}\label{Phi-U}
\Phi(\zeta)=\sum\limits_{k=1}^4U_k(x,y,z)e_k.
\end{equation}
В предположении, что функции $U_k:\Omega\rightarrow\mathbb{C}$ являются $\mathbb{R}$-дифференцируемыми в области $\Omega$, т. е. во всех точках $(x,y,z)\in\Omega$ выполняются соотношения
$$U_k(x+\Delta x,y+\Delta y,z+\Delta z)-U_k(x,y,z)=$$
$$=\frac{\partial U_k}{\partial x}\Delta x+\frac{\partial U_k}{\partial y}\Delta y+\frac{\partial U_k}{\partial z}\Delta z+o\left(\sqrt{(\Delta x)^2+(\Delta y)^2+(\Delta z)^2}\right),$$
$$(\Delta x)^2+(\Delta y)^2+(\Delta z)^2 \rightarrow 0,$$
в теореме 1 из \cite{Shpakivskiy-Kuzmenko} установлены необходимые и достаточные условия право-$G$-моногенности отображения $\Phi$ (или лево-$G$-моногенности отображения $\widehat{\Phi}$) (аналоги условий Коши--Римана), которые всюду в области $\Omega_\zeta$ в свернутом виде выражаются равенствами

\begin{equation}\label{umova-r-K-R}
\frac{\partial \Phi}{\partial y}=i_2\frac{\partial \Phi}{\partial x}\,, \qquad
 \frac{\partial \Phi}{\partial z}=i_3\frac{\partial \Phi}{\partial x}
\end{equation}
\begin{equation}\label{umova-l-K-R}
\left(\mbox{или }\quad \frac{\partial \widehat{\Phi}}{\partial y}=\frac{\partial \widehat{\Phi}}{\partial x}
i_2\,, \qquad \frac{\partial \widehat{\Phi}}{\partial z}=\frac{\partial
\widehat{\Phi}}{\partial x}i_3\,\right).
\end{equation}

Из леммы 2 работы \cite{Shpakivskiy-Kuzmenko} вытекает, что точки $(x,y,z)\in\mathbb{R}^3$, соответствующие необратимым элементам $\zeta=xi_1+yi_2+zi_3$, лежат на прямых
$$L^1: x+y\Re\,a_1+z\Re\,b_1=0,
\qquad y\Im\,a_1+z\Im\,b_1=0,$$
$$L^2: x+y\Re\,a_2+z\Re\,b_2=0,
\qquad y\Im\,a_2+z\Im\,b_2=0$$
в трехмерном пространстве $\mathbb{R}^3$.

Обозначим через $f_k(E_3)$ при $k=1,2$, --- образ множества $E_3$ при отображении $f_k$. Отметим, что существенным для дальнейшего изложения является предположение $f_1(E_3)=f_2(E_3)=\mathbb{C}$.   Очевидно, что оно имеет место тогда и только тогда, когда хотя бы одно из чисел в каждой из пар $(a_1,b_1)$, $(a_2,b_2)$ принадлежит $\mathbb{C}\setminus\mathbb{R}$.

Пусть $D_1$ и $D_2$ --- области в $\mathbb{C}$, на которые область $\Omega_\zeta$ отображается соответственно функционалами $f_1$ и $f_2$.

В теореме 5 из \cite{Shpakivskiy-Kuzmenko} описаны все право-$G$-моногенные отображения, определенные в области $\Omega_\zeta$ и принимающие значения в алгебре $\mathbb{H(C)}$, с помощью аналитических функций комплексной переменной. А именно, если область $\Omega\subset
\mathbb{R}^{3}$ выпукла в направлении прямых $L^1$, $L^2$ и $f_1(E_3)=f_2(E_3)=\mathbb{C}$, то
 каждое право-$G$-моногенное отображение $\Phi:\Omega_\zeta\rightarrow\mathbb{H(C)}$ представляется в виде
\begin{equation}\label{Phi-r-rozklad}
\Phi(\zeta)=F_1(\xi_1)e_1+F_2(\xi_2)e_2+F_3(\xi_1)e_3+F_4(\xi_2)e_4
\end{equation}
$$\forall\,\zeta=xi_1+yi_2+zi_3\in\Omega_\zeta\,,$$ где
 $F_1, F_3$ --- некоторые аналитические в области $D_1$ функции переменной $\xi_1=x+ya_1+zb_1$, а
  $F_2, F_4$ --- некоторые аналитические в области $D_2$ функции переменной $\xi_2=x+ya_2+zb_2$.

При таких же предположениях, каждое лево-$G$-моногенное отображение
$\widehat{\Phi}:\Omega_\zeta\rightarrow\mathbb{H(C)}$ представляется в виде
\begin{equation}\label{Phi-l-rozklad}
\widehat{\Phi}(\zeta)=\widehat{F}_1(\xi_1)e_1+\widehat{F}_2(\xi_2)e_2+\widehat{F}_3(\xi_2)e_3+
\widehat{F}_4(\xi_1)e_4,
\end{equation}
где
 $\widehat{F}_1, \widehat{F}_4$ --- некоторые аналитические в области $D_1$ функции переменной $\xi_1=x+ya_1+zb_1$, а
  $\widehat{F}_2, \widehat{F}_3$ --- некоторые аналитические в области $D_2$ функции переменной $\xi_2=x+ya_2+zb_2$.

Отметим, что производная Гато право-$G$-моногенного отображения $\Phi(\zeta)$ (или лево-$G$-моногенного отображения $\widehat{\Phi}(\zeta)$) вычисляется по формуле
$$\Phi'(\zeta)=F_1'(\xi_1)e_1+F_2'(\xi_2)e_2+F_3'(\xi_1)e_3+F_4'(\xi_2)e_4$$
$$\left(\mbox{или \quad}\widehat{\Phi}'(\zeta)=\widehat{F}_1'(\xi_1)e_1+\widehat{F}_2'(\xi_2)e_2+\widehat{F}_3'(\xi_2)e_3+
\widehat{F}_4'(\xi_1)e_4\right).$$

Используя представления (\ref{Phi-r-rozklad}), (\ref{Phi-l-rozklad}), в работе \cite{Kuzm-series} получены разложения $G$-моногенных отображений в ряды Тейлора. Если $f_1(E_3)=f_2(E_3)=\mathbb{C}$ и $\zeta_0:=x_0i_1+y_0i_2+z_0i_3\in\Omega_\zeta$, то каждое право-$G$-моногенное отображение $\Phi:\Omega_\zeta\rightarrow\mathbb{H(C)}$ представляется в виде суммы сходящегося степенного ряда
\begin{equation}\label{r-Taylor}
\Phi(\zeta)=\sum_{n=0}^{\infty}(\zeta-\zeta_0)^{n}\,p_n,\qquad p_n\in\mathbb{H(C)},
\end{equation}
а каждое лево-$G$-моногенное отображение $\widehat{\Phi}:\Omega_\zeta\rightarrow\mathbb{H(C)}$ --- в виде суммы сходящегося степенного ряда:
\begin{equation}\label{l-Taylor}
\widehat{\Phi}(\zeta)=\sum_{n=0}^{\infty}\widehat{p}_n\,(\zeta-\zeta_0)^{n},\qquad \widehat{p}_n\in\mathbb{H(C)}.
\end{equation}

\section{Теорема Морера}

Рассмотрим алгебру $\mathbb{\widetilde{H}(R)}$ с базисом $\{e_k,ie_k\}_{k=1}^4$ над полем действительных чисел $\mathbb{R}$, которая изоморфна алгебре $\mathbb{H(C)}$ над полем комплексных чисел $\mathbb{C}$. Очевидно, что в алгебре $\mathbb{\widetilde{H}(R)}$ существует базис $\{i_k\}_{k=1}^8$, где векторы $i_1,i_2,i_3$ те же, что и в соотношениях (\ref{vectors-i}).

Для элемента $a:=\sum\limits_{k=1}^{8}a_ki_k,\,a_k\in\mathbb{R}$ определим евклидову норму
$$\|a\|:=\sqrt{\sum\limits_{k=1}^8a_k^2}\,.$$
Соответственно, $\|\zeta\|=\sqrt{x^2+y^2+z^2}$ и $\|i_1\|=\|i_2\|=\|i_3\|=1.$

В силу теоремы об эквивалентности норм, для произвольного элемента
$b:=\sum\limits_{k=1}^4(b_{1k}+ib_{2k})e_k,\,b_{1k},b_{2k}\in\mathbb{R}$, выполняются неравенства
\begin{equation}\label{norma-ineq}
|b_{1k}+ib_{2k}|\leq\sqrt{\sum\limits_{k=1}^{4}(b_{1k}^2+b_{2k}^2)}\leq c\|b\|,
\end{equation}
где $c$ --- положительная постоянная, не зависящая от $b$.

Пусть $\gamma$ --- жорданова спрямляемая кривая в $\mathbb{R}^{3}$. Для непрерывной функции $\Psi:\gamma_{\zeta}\rightarrow
\mathbb{H(C)}$ вида
\begin{equation}\label{Phi-form}
\Psi(\zeta)=\sum\limits_{k=1}^{4}U_k(x,y,z)e_k+iV_k(x,y,z)e_k,
\end{equation}
где $(x,y,z)\in\gamma$ и
 $U_k:\gamma\rightarrow\mathbb{R}$,
$V_k:\gamma\rightarrow\mathbb{R}$,
определим интегралы по жордановой спрямляемой кривой
 $\gamma_\zeta$ равенствами

$$\int\limits_{\gamma_\zeta}{d\zeta\Psi(\zeta)}:=\sum\limits_{k=1}^{4}{e_k\int
\limits_{\gamma}U_k(x,y,z)dx}+\sum\limits_{k=1}^{4}{i_2e_k\int\limits_{\gamma}
U_k(x,y,z)dy}+$$
$$+\sum\limits_{k=1}^{4}{i_3e_k\int\limits_{\gamma}U_k(x,y,z)dz}+i\sum
\limits_{k=1}^{4}{e_k\int\limits_{\gamma}V_k(x,y,z)dx}+$$
$$+i\sum\limits_{k=1}^{4}{i_2e_k\int\limits_{\gamma}V_k(x,y,z)dy}+i\sum
\limits_{k=1}^{4}{i_3e_k\int\limits_{\gamma}V_k(x,y,z)dz}$$

\noindent и

$$\int\limits_{\gamma_\zeta}{\Psi(\zeta)d\zeta}:=\sum\limits_{k=1}^{4}
{e_k\int\limits_{\gamma}U_k(x,y,z)dx}+\sum\limits_{k=1}^{4}{e_ki_2\int
\limits_{\gamma}U_k(x,y,z)dy}+$$
$$+\sum\limits_{k=1}^{4}{e_ki_3\int\limits_{\gamma}U_k(x,y,z)dz}+i\sum
\limits_{k=1}^{4}{e_k\int\limits_{\gamma}V_k(x,y,z)dx}+$$
$$+i\sum\limits_{k=1}^{4}{e_ki_2\int\limits_{\gamma}V_k(x,y,z)dy}+i\sum
\limits_{k=1}^{4}{e_ki_3\int\limits_{\gamma}V_k(x,y,z)dz},$$
где $d\zeta:=dx+i_2dy+i_3dz$.

\begin{lemma}\label{Lema_do_Morera} Если $\gamma$ --- замкнутая жорданова спрямляемая кривая в $\mathbb{R}^3$ и функция $\Psi:\gamma_\zeta\rightarrow\mathbb{H(C)}$ непрерывна, то
\begin{equation}\label{lema_do_Morera-1}
\Biggr\|\int\limits_{\gamma_\zeta}d\zeta\,\Psi(\zeta)\Biggr\|\leq c\int\limits_{\gamma_\zeta}\|\Psi(\zeta)\|\|d\zeta\|
\end{equation}
\noindent и
\begin{equation}\label{lema_do_Morera-2}
\Biggl\|\int\limits_{\gamma_\zeta}\Psi(\zeta)\,d\zeta\Biggr\|\leq c\int\limits_{\gamma_\zeta}\|\Psi(\zeta)\|\|d\zeta\|,
\end{equation}
где $c$ --- абсолютная положительная постоянная.
\end{lemma}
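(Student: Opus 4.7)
The plan is to reduce the vector-valued line integral to a finite sum of real scalar integrals of the components $U_k$ and $V_k$, and then to control each term pointwise by $\|\Psi(\zeta)\|$ via the norm inequality \eqref{norma-ineq}.

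First, I would insert the expansion \eqref{Phi-form} of $\Psi$ into the definition of $\int_{\gamma_\zeta}d\zeta\,\Psi(\zeta)$. The result is a sum of twenty-four terms of the form $w\int_\gamma U_k(x,y,z)\,dt$ or $iw\int_\gamma V_k(x,y,z)\,dt$, where $t\in\{x,y,z\}$ and $w$ runs over the fixed finite set $\{e_k,i_2e_k,i_3e_k:k=1,\dots,4\}\subset\mathbb{H(C)}$. Applying the triangle inequality for $\|\cdot\|$ gives
$$\Biggl\|\int_{\gamma_\zeta}d\zeta\,\Psi(\zeta)\Biggr\|\le M\sum_{k=1}^{4}\sum_{t\in\{x,y,z\}}\Biggl(\Bigl|\int_\gamma U_k\,dt\Bigr|+\Bigl|\int_\gamma V_k\,dt\Bigr|\Biggr),$$
where $M$ denotes the maximum of the finitely many norms $\|w\|$ and $\|iw\|$ appearing in the sum.

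Next, each real line integral is controlled in the usual way: $|\int_\gamma U_k\,dt|\le\int_\gamma|U_k|\,|dt|$, and $|dt|\le\|d\zeta\|$ for $t\in\{x,y,z\}$. The crucial pointwise bound comes from \eqref{norma-ineq} applied to $b=\Psi(\zeta)$, which yields $|U_k(x,y,z)|,|V_k(x,y,z)|\le c\|\Psi(\zeta)\|$ with a constant $c$ independent of the point $\zeta\in\gamma_\zeta$. Chaining these inequalities produces \eqref{lema_do_Morera-1} with a final constant that depends only on the algebraic data (the structure constants of $\mathbb{H(C)}$ and on $i_1,i_2,i_3$).

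The estimate \eqref{lema_do_Morera-2} is obtained by the very same argument; the only difference is that the factors $i_2 e_k$ and $i_3 e_k$ are replaced by $e_k i_2$ and $e_k i_3$, which changes only the numerical value of $M$. The main thing to verify is essentially bookkeeping: continuity of $\Psi$ and rectifiability of $\gamma$ ensure the existence of all the Riemann--Stieltjes integrals of $U_k$ and $V_k$ against $dx,dy,dz$. Beyond this, I do not anticipate any genuine obstacle --- the lemma asserts, in effect, that switching from coordinate-wise scalar integration to the algebra-valued integration defined above costs at most a multiplicative constant.
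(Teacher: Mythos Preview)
Your argument is correct and follows essentially the same route as the paper: expand $\Psi$ via \eqref{Phi-form}, apply the triangle inequality to the defining sum of the integral, bound the coefficients $\|i_s e_k\|$ (resp.\ $\|e_k i_s\|$) by absolute constants, and invoke \eqref{norma-ineq} with $b=\Psi(\zeta)$ to control the components pointwise by $\|\Psi(\zeta)\|$. The only cosmetic difference is that the paper pairs each $U_k$ with $V_k$ and estimates $|U_k+iV_k|$ in one stroke, whereas you keep them separate; this changes neither the logic nor the final constant in any essential way.
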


\begin{proof}
Используя представление функции $\Psi$ в виде (\ref{Phi-form}), получаем оценку
$$\Biggl\|\int\limits_{\gamma_\zeta}d\zeta\Psi(\zeta)\Biggr\|\leq \sum\limits_{k=1}^4\|i_1e_k\|\int\limits_{\gamma}|U_k(x,y,z)+iV_k(x,y,z)|\,dx+$$
$$+\sum\limits_{k=1}^4\|i_2e_k\|\int\limits_{\gamma}|U_k(x,y,z)+iV_k(x,y,z)|\,dy+$$
$$+\sum\limits_{k=1}^4\|i_3e_k\|\int\limits_{\gamma}|U_k(x,y,z)+iV_k(x,y,z)|\,dz.$$
Принимая во внимание неравенство (\ref{norma-ineq}) при $b=\Psi(\zeta)$ и неравенства $\|i_se_k\|\leq c_s,\,\,s=1,2,3$, где $c_s$ --- абсолютные положительные постоянные, получаем оценку (\ref{lema_do_Morera-1}). Аналогично устанавливается оценка (\ref{lema_do_Morera-2}). Лемма доказана.
\end{proof}

Под треугольником $\Delta$ будем понимать плоскую фигуру ограниченную тремя отрезками, соединяющими три его вершины. Через $\partial\Delta$ обозначим границу треугольника $\Delta$ в относительной топологии его плоскости.

Используя лемму \ref{Lema_do_Morera} для отображений, принимающих значения в алгебре $\mathbb{H(C)}$, по стандартной схеме доказывается следующий аналог теоремы Морера.

\begin{theorem}\label{teor-Morera} Пусть $f_1(E_3)=f_2(E_3)=\mathbb{C}$. Если отображение $\Phi:\Omega_\zeta\rightarrow\mathbb{H(C)}$ $(\mbox{или}\,\,\,\widehat{\Phi}:\Omega_\zeta\rightarrow\mathbb{H(C)})$ непрерывно в области $\Omega_\zeta$ и удовлетворяет равенству
\begin{equation}\label{int-morera-r}
\int\limits_{\partial\Delta_\zeta}d\zeta\,\Phi(\zeta)=0
\end{equation}
\begin{equation}\label{int-morera-l}
\left(\mbox{или  } \int\limits_{\partial\Delta_\zeta}\widehat{\Phi}(\zeta)\,d\zeta=0\right)
\end{equation}
для каждого треугольника $\Delta_\zeta$ такого, что замыкание $\overline{\Delta_\zeta}\subset\Omega_\zeta$, то отображение $\Phi$ право-$G$-моногенное \big($\mbox{или  }\,\,\,\widehat{\Phi}$ --- лево-$G$-моногенное\big) в области $\Omega_\zeta$.
\end{theorem}

\section{$H-$моногенные отображения}

Ф.~Хаусдорф \cite{Hausdorff} предложил определение аналитической функции в любой ассоциативной (коммутативной или некоммутативной) алгебре $\mathbb{A}$ над полем $\mathbb{C}$ с единицей, которое может быть сформулировано следующим образом.

Гиперкомплексная функция
\begin{equation}\label{dF-varphi-}
f(\eta)=\sum\limits_{k=1}^{n}f_k(\eta_1,\ldots,\eta_n)e_k\,,
\end{equation} где $e_k$ --- базисные элементы алгебры $\mathbb{A}$, называется \emph{$H$-аналитической функцией} переменной $\eta:=\sum\limits_{k=1}^{n}\,\eta_ke_k$, если компоненты $f_k$ из разложения (\ref{dF-varphi-}) являются аналитическими функциями комплексных переменных $\eta_1,\ldots,\eta_n$ и дифференциал
\begin{equation}\label{dF-varphi+}
df:=\sum\limits_{k=1}^ndf_k(\eta_1,\ldots,\eta_n)e_k=\sum\limits_{j,k=1}^{n}\frac{\partial f_k}{\partial\eta_j}\,d\eta_j\,e_k
\end{equation}
является линейным однородным полиномом дифференциала $d\eta:=\sum\limits_{k=1}^{n}d\eta_k\,e_k$, т.~е.
\begin{equation}\label{dF-varphi--}
df=\sum\limits_{s=1}^{n^2}A_s\,d\eta\,B_s\,,
\end{equation}
где $A_s$ и $B_s$ --- некоторые $\mathbb{A}$--значные функции.

При этом значение $f'(\eta):=\sum\limits_{s=1}^{n^2}A_s\,B_s$ называют \emph{производной Хаусдорфа} функции $f(\eta)$.

Отметим, что в работе \cite{Ringleb} при определении $H$-аналитической функции в ассоциативной алгебре над полем $\mathbb{R}$, предполагается аналитичность
 действительнозначных компонент $f_k$ из разложения (\ref{dF-varphi-}), а в работе
\cite{Rinehart} рассматриваются ассоциативные алгебры над полями $\mathbb{R}$ или $\mathbb{C}$ и предполагается лишь существование частных производных $\frac{\partial f_k}{\partial\eta_j}$ при всех $j,k=1,2,\ldots,n$.

Подчеркнем, что свойство $H$-аналитичноcти функции не зависит от выбора базиса алгебры. Кроме того, если функции $f(\eta)$ и $g(\eta)$ $H$-аналитические, то функции $f(\eta)+g(\eta)$ и $f(\eta)\cdot g(\eta)$ также $H$-аналитические, при этом $d(f+g)=df+dg$ и $d(f\cdot g)=df\cdot g+f\cdot dg$\, (см. \cite{Ringleb,Portman}).

Теперь реализуем подход Хаусдорфа к отображениям переменной $\zeta=xi_1+yi_2+zi_3$.

Непрерывное отображение $\Phi:\Omega_\zeta\rightarrow\mathbb{H(C)}$ вида (\ref{Phi-U}) будем называть \emph{$H$-моногенным}
в области $\Omega_\zeta\subset E_3$,
если $\Phi$ дифференцируемо по Хаусдорфу в каждой точке $\zeta\in\Omega_\zeta$, т.~е. если компоненты отображения (\ref{Phi-U}) имеют частные производные первого порядка по переменным $x,y,z$, и формальный дифференциал отображения
\begin{equation}\label{form-dif}
d\Phi:=\sum\limits_{k=1}^4\left(\frac{\partial U_k}{\partial x}dx+\frac{\partial U_k}{\partial y}dy+\frac{\partial U_k}{\partial z}dz\right)e_k
\end{equation}
является линейным однородным полиномом от дифференциала $d\zeta=dx+i_2dy+i_3dz$, т.~е.
\begin{equation}\label{--riv-1}
d\Phi=\sum\limits_{s=1}^{16}A_s\,d\zeta\,B_s\,,
\end{equation}
где $A_s,\,B_s$ --- некоторые $\mathbb{H(C)}$--значные функции.

Отметим, что если частные производные первого порядка функций $U_k$ при $k=1,2,3,4$ существуют и непрерывны, то формальный дифференциал (\ref{form-dif}) будет полным дифференциалом отображения $\Phi$, т. е. является главной частью приращения этого отображения.

Как и выше, $\Phi'_H(\zeta):=\sum\limits_{s=1}^{16}A_sB_s$ назовем \emph{производной Хаусдорфа} отображения $\Phi(\zeta)$.

Покажем, что определение производной $\Phi'_H$ является корректным.

\begin{theorem}
Если отображение $\Phi:\Omega_\zeta\rightarrow\mathbb{H(C)}$ является $H$-моногенным в области $\Omega_\zeta$\,, то его производная $\Phi'_H$ существует и не зависит от выбора функций $A_s,\,B_s$ в равенстве \em(\ref{--riv-1})\em, при этом
$$\Phi'_H(\zeta)=\frac{\partial\Phi}{\partial x}.$$
\end{theorem}

\begin{proof}
Вследствие $H$-моногенности отображения $\Phi$ выполняется равенство
\begin{equation}\label{dif-teor}
\sum\limits_{s=1}^{16}A_sd\zeta B_s=\sum\limits_{k=1}^4\left(\frac{\partial U_k}{\partial x}dx+\frac{\partial U_k}{\partial y}dy+\frac{\partial U_k}{\partial z}dz\right)e_k\,.
\end{equation}

Пусть
$$A_s=a_{s1}e_1+a_{s2}e_2+a_{s3}e_3+a_{s4}e_4\,,$$
\vspace{-8mm}
\begin{equation}\label{A,B}
\end{equation}
\vspace{-8mm}
$$B_s=b_{s1}e_1+b_{s2}e_2+b_{s3}e_3+b_{s4}e_4$$
для $s=1,2,\ldots, 16.$
Учитывая равенство $d\zeta=(dx+a_1dy+b_1dz)e_1+(dx+a_2dy+b_2dz)e_2$ и (\ref{A,B}), получаем:
$$A_sd\zeta B_s=(a_{s1}e_1+a_{s2}e_2+a_{s3}e_3+a_{s4}e_4)\Big((dx+a_1dy+b_1dz)e_1+$$
$$+(dx+a_2dy+b_2dz)e_2\Big)(b_{s1}e_1+b_{s2}e_2+b_{s3}e_3+b_{s4}e_4)=$$
$$=\Big(a_{s1}b_{s1}(dx+a_1dy+b_1dz)+a_{s3}b_{s4}(dx+a_2dy+b_2dz)\Big)e_1+$$
$$+\Big(a_{s2}b_{s2}(dx+a_2dy+b_2dz)+a_{s4}b_{s3}(dx+a_1dy+b_1dz)\Big)e_2+$$
$$+\Big(a_{s1}b_{s3}(dx+a_1dy+b_1dz)+a_{s3}b_{s2}(dx+a_2dy+b_2dz)\Big)e_3+$$
\begin{equation}\label{A-dzeta-B}
+\Big(a_{s2}b_{s4}(dx+a_2dy+b_2dz)+a_{s4}b_{s1}(dx+a_1dy+b_1dz)\Big)e_4\,.
\end{equation}
Следствием равенств (\ref{dif-teor}) и (\ref{A-dzeta-B}) являются соотношения
$$\frac{\partial U_1}{\partial x}=\sum\limits_{s=1}^{16}a_{s1}b_{s1}+a_{s3}b_{s4}\,,\qquad \frac{\partial U_2}{\partial x}=\sum\limits_{s=1}^{16}a_{s2}b_{s2}+a_{s4}b_{s3}\,,$$
\vspace{-7mm}
\begin{equation}\label{pohidni}
\end{equation}
\vspace{-7mm}
$$\frac{\partial U_3}{\partial x}=\sum\limits_{s=1}^{16}a_{s1}b_{s3}+a_{s3}b_{s2}\,,\qquad \frac{\partial U_4}{\partial x}=\sum\limits_{s=1}^{16}a_{s2}b_{s4}+a_{s4}b_{s1}\,.$$

С учетом равенств (\ref{A,B}), имеем
$$\Phi'_H(\zeta):=\sum\limits_{s=1}^{16}A_s B_s=\sum\limits_{s=1}^{16}\Big((a_{s1}b_{s1}+a_{s3}b_{s4})e_1+$$
$$+(a_{s2}b_{s2}+a_{s4}b_{s3})e_2+(a_{s1}b_{s3}+a_{s3}b_{s2})e_3+(a_{s2}b_{s4}+a_{s4}b_{s1})e_4\Big),$$
откуда, принимая во внимание соотношения (\ref{pohidni}), получаем
$$\Phi'_H(\zeta)=\frac{\partial U_1}{\partial x}e_1+\frac{\partial U_2}{\partial x}e_2+\frac{\partial U_3}{\partial x}e_3+\frac{\partial U_4}{\partial x}e_4=\frac{\partial \Phi}{\partial x}.$$
Теорема доказана.
\end{proof}

\begin{theorem}\label{teor-pohidna-dobutku}
Если отображения $\Phi:\Omega_\zeta\rightarrow\mathbb{H(C)}$ и $\Psi:\Omega_\zeta\rightarrow\mathbb{H(C)}$ являются $H$-моногенными в области $\Omega_\zeta$, то произведение $\Phi\cdot\Psi$ также является $H$-моногенным отображением в $\Omega_\zeta$, при этом
$$d(\Phi\cdot\Psi)=d\Phi\cdot\Psi+\Phi\cdot d\Psi.$$
\end{theorem}

\begin{proof}
 Пусть
 $$\Phi(\zeta)=\sum\limits_{k=1}^4U_k(x,y,z)e_k\,,\quad \Psi(\zeta)=\sum\limits_{k=1}^4V_k(x,y,z)e_k\,.$$
Тогда
$$d\Phi=\sum\limits_{k=1}^4\left(\frac{\partial U_k}{\partial x}dx+\frac{\partial U_k}{\partial y}dy+\frac{\partial U_k}{\partial z}dz\right)e_k\,,$$
$$d\Psi=\sum\limits_{k=1}^4\left(\frac{\partial V_k}{\partial x}dx+\frac{\partial V_k}{\partial y}dy+\frac{\partial V_k}{\partial z}dz\right)e_k\,$$
и
$$d(\Phi\cdot\Psi)=d\Big(U_1V_1+U_3V_4\Big)e_1+d\Big(U_2V_2+U_4V_3\Big)e_2+$$
$$+d\Big(U_1V_3+U_3V_2\Big)e_3+d\Big(U_2V_4+U_4V_1\Big)e_4=$$
$$=\Bigg[\left(\frac{\partial U_1}{\partial x}V_1+\frac{\partial V_1}{\partial x}U_1+\frac{\partial U_3}{\partial x}V_4+\frac{\partial V_4}{\partial x}U_3\right)dx+$$
$$+\left(\frac{\partial U_1}{\partial y}V_1+\frac{\partial V_1}{\partial y}U_1+\frac{\partial U_3}{\partial y}V_4+\frac{\partial V_4}{\partial y}U_3\right)dy+$$
$$+\left(\frac{\partial U_1}{\partial z}V_1+\frac{\partial V_1}{\partial z}U_1+\frac{\partial U_3}{\partial z}V_4+\frac{\partial V_4}{\partial z}U_3\right)dz\Bigg]e_1+$$

$$+\Bigg[\left(\frac{\partial U_2}{\partial x}V_2+\frac{\partial V_2}{\partial x}U_2+\frac{\partial U_4}{\partial x}V_3+\frac{\partial V_3}{\partial x}U_4\right)dx+$$
$$+\left(\frac{\partial U_2}{\partial y}V_2+\frac{\partial V_2}{\partial y}U_2+\frac{\partial U_4}{\partial y}V_3+\frac{\partial V_3}{\partial y}U_4\right)dy+$$
$$+\left(\frac{\partial U_2}{\partial z}V_2+\frac{\partial V_2}{\partial z}U_2+\frac{\partial U_4}{\partial z}V_3+\frac{\partial V_3}{\partial z}U_4\right)dz\Bigg]e_2+$$

$$+\Bigg[\left(\frac{\partial U_1}{\partial x}V_3+\frac{\partial V_3}{\partial x}U_1+\frac{\partial U_3}{\partial x}V_2+\frac{\partial V_2}{\partial x}U_3\right)dx+$$
$$+\left(\frac{\partial U_1}{\partial y}V_3+\frac{\partial V_3}{\partial y}U_1+\frac{\partial U_3}{\partial y}V_2+\frac{\partial V_2}{\partial y}U_3\right)dy+$$
$$+\left(\frac{\partial U_1}{\partial z}V_3+\frac{\partial V_3}{\partial z}U_1+\frac{\partial U_3}{\partial z}V_2+\frac{\partial V_2}{\partial z}U_3\right)dz\Bigg]e_3+$$

$$+\Bigg[\left(\frac{\partial U_2}{\partial x}V_4+\frac{\partial V_4}{\partial x}U_2+\frac{\partial U_4}{\partial x}V_1+\frac{\partial V_1}{\partial x}U_4\right)dx+$$
$$+\left(\frac{\partial U_2}{\partial y}V_4+\frac{\partial V_4}{\partial y}U_2+\frac{\partial U_4}{\partial y}V_1+\frac{\partial V_1}{\partial y}U_4\right)dy+$$
$$+\left(\frac{\partial U_2}{\partial z}V_4+\frac{\partial V_4}{\partial z}U_2+\frac{\partial U_4}{\partial z}V_1+\frac{\partial V_1}{\partial z}U_4\right)dz\Bigg]e_4.$$
Преобразуем полученное выражение к следующему виду:

\small

$$\Bigg(V_1\frac{\partial U_1}{\partial x}dx+V_1\frac{\partial U_1}{\partial y}dy+V_1\frac{\partial U_1}{\partial z}dz+
V_4\frac{\partial U_3}{\partial x}dx+V_4\frac{\partial U_3}{\partial y}dy+V_4\frac{\partial U_3}{\partial z}dz\Bigg)e_1+$$

$$+\Bigg(V_2\frac{\partial U_2}{\partial x}dx+V_2\frac{\partial U_2}{\partial y}dy+V_2\frac{\partial U_2}{\partial z}dz+
V_3\frac{\partial U_4}{\partial x}dx+V_3\frac{\partial U_4}{\partial y}dy+V_3\frac{\partial U_4}{\partial z}dz\Bigg)e_2+$$

$$+\Bigg(V_3\frac{\partial U_1}{\partial x}dx+V_3\frac{\partial U_1}{\partial y}dy+V_3\frac{\partial U_1}{\partial z}dz+
V_2\frac{\partial U_3}{\partial x}dx+V_2\frac{\partial U_3}{\partial y}dy+V_2\frac{\partial U_3}{\partial z}dz\Bigg)e_3+$$

$$+\Bigg(V_4\frac{\partial U_2}{\partial x}dx+V_4\frac{\partial U_2}{\partial y}dy+V_4\frac{\partial U_2}{\partial z}dz+
V_1\frac{\partial U_4}{\partial x}dx+V_1\frac{\partial U_4}{\partial y}dy+V_1\frac{\partial U_4}{\partial z}dz\Bigg)e_4+$$

$$+\Bigg(U_1\frac{\partial V_1}{\partial x}dx+U_1\frac{\partial V_1}{\partial y}dy+U_1\frac{\partial V_1}{\partial z}dz+
U_4\frac{\partial V_3}{\partial x}dx+U_4\frac{\partial V_3}{\partial y}dy+U_4\frac{\partial V_3}{\partial z}dz\Bigg)e_1+$$

$$+\Bigg(U_2\frac{\partial V_2}{\partial x}dx+U_2\frac{\partial V_2}{\partial y}dy+U_2\frac{\partial V_2}{\partial z}dz+
U_3\frac{\partial V_4}{\partial x}dx+U_3\frac{\partial V_4}{\partial y}dy+U_3\frac{\partial V_4}{\partial z}dz\Bigg)e_2+$$

$$+\Bigg(U_3\frac{\partial V_1}{\partial x}dx+U_3\frac{\partial V_1}{\partial y}dy+U_3\frac{\partial V_1}{\partial z}dz+
U_2\frac{\partial V_3}{\partial x}dx+U_2\frac{\partial V_3}{\partial y}dy+U_2\frac{\partial V_3}{\partial z}dz\Bigg)e_3+$$

$$+\Bigg(U_4\frac{\partial V_2}{\partial x}dx+U_4\frac{\partial V_2}{\partial y}dy+U_4\frac{\partial V_2}{\partial z}dz+
U_1\frac{\partial V_4}{\partial x}dx+U_1\frac{\partial V_4}{\partial y}dy+U_1\frac{\partial V_4}{\partial z}dz\Bigg)e_4\,,$$
\normalsize
откуда будем иметь
\small
$$\Big(V_1dU_1+V_4dU_3\Big)e_1+\Big(V_2dU_2+V_3dU_4\Big)e_2+\Big(V_3dU_1+V_2dU_3\Big)e_3+$$
$$+\Big(V_4dU_2+V_1dU_4\Big)e_4+\Big(U_1dV_1+U_3dV_4\Big)e_1+\Big(U_2dV_2+U_4dV_3\Big)e_2+$$
$$+\Big(U_1dV_3+U_3dV_2\Big)e_3+\Big(U_2dV_4+U_4dV_1\Big)e_4=d\Phi\cdot\Psi+\Phi\cdot d\Psi.$$
\normalsize
Теорема доказана.
\end{proof}

В силу теоремы \ref{teor-pohidna-dobutku} множество $H$-моногенных отображений со значениями в алгебре $\mathbb{H(C)}$ образует функциональную алгебру, поскольку произведение двух $H$-моногенных отображений также является $H$-моногенным отображением.

В следующей теореме устанавливается связь между $G$-моногенными и $H$-моногенными отображениями.

\begin{theorem}\label{teor-1}
Каждое право-$G$-моногенное отображение $\Phi:\Omega_\zeta\rightarrow\mathbb{H(C)}$
и каждое лево-$G$-моногенное отображение $\widehat{\Phi}:\Omega_\zeta\rightarrow\mathbb{H(C)}$
  в области $\Omega_\zeta$  являются $H$-моногенными отображениями в этой области.
\end{theorem}

\begin{proof}
Пусть $\Phi:\Omega_\zeta\rightarrow\mathbb{H(C)}$ --- право-$G$-моногенное отображение. Тогда существование частных производных первого порядка от компонент отображения $\Phi$ вытекает из существования производной Гато (равенство (\ref{ozn-r-G-monog})).
Покажем теперь, что дифференциал
\begin{equation}\label{-riv-1}
d\Phi=\frac{\partial\Phi}{\partial x}dx+\frac{\partial\Phi}{\partial y}dy+\frac{\partial\Phi}{\partial z}dz
\end{equation}
представим в виде (\ref{--riv-1}).

С этой целью заметим, что следствием равенства (\ref{-riv-1}) и условий (\ref{umova-r-K-R}) является равенство
$$d\Phi=\big(dx+i_2dy+i_3dz\big)\frac{\partial\Phi}{\partial x}=d\zeta\,\Phi'(\zeta),$$
т.~е. представление вида  (\ref{--riv-1}), в котором $A_1=1, B_1=\Phi'(\zeta)$.

Аналогично устанавливается, что следствием равенства (\ref{-riv-1}) при $\Phi=\widehat{\Phi}$ и условий (\ref{umova-l-K-R}) является равенство
$$d\widehat{\Phi}=\widehat{\Phi}'(\zeta)d\zeta,$$
т.~е. снова представление вида  (\ref{--riv-1}), в котором $A_1=\widehat{\Phi}'(\zeta), B_1=1$.
 Теорема доказана.
\end{proof}

Поскольку право- и лево-$G$-моногенные отображения являются $H$-моногенными, то их произведения также являются $H$-моногенными отображениями. Поэтому следствием теорем \ref{teor-pohidna-dobutku}, \ref{teor-1} и представлений  (\ref{Phi-r-rozklad}),  (\ref{Phi-l-rozklad}) является следующее утверждение.

\begin{corollary}
 Пусть область $\Omega\subset \mathbb{R}^{3}$ является выпуклой в направлении прямых $L^1$, $L^2$ и $f_1(E_3)=f_2(E_3)=\mathbb{C}$. Тогда $H$-моногенными в области $\Omega_\zeta$ являются отображения
$$\Phi(\zeta)\cdot\widehat{\Phi}(\zeta)=$$
$$=\Big(F_1(\xi_1)\widehat{F}_1(\xi_1)+F_3(\xi_1)\widehat{F}_4(\xi_1)\Big)e_1+\Big(F_2(\xi_2)\widehat{F}_2(\xi_2)+F_4(\xi_2)\widehat{F}_3(\xi_2)\Big)e_2+$$
$$+\Big(F_1(\xi_1)\widehat{F}_3(\xi_2)+F_3(\xi_1)\widehat{F}_2(\xi_2)\Big)e_3+\Big(F_2(\xi_2)\widehat{F}_4(\xi_1)+F_4(\xi_2)\widehat{F}_1(\xi_1)\Big)e_4,$$
\vskip 1mm
$$\widehat{\Phi}(\zeta)\cdot\Phi(\zeta)=$$
$$=\Big(\widehat{F}_1(\xi_1)F_1(\xi_1)+\widehat{F}_3(\xi_2)F_4(\xi_2)\Big)e_1+\Big(\widehat{F}_2(\xi_2)F_2(\xi_2)+\widehat{F}_4(\xi_1)F_3(\xi_1)\Big)e_2+$$
$$+\Big(\widehat{F}_1(\xi_1)F_3(\xi_1)+\widehat{F}_3(\xi_2)F_2(\xi_2)\Big)e_3+\Big(\widehat{F}_2(\xi_2)F_4(\xi_2)+\widehat{F}_4(\xi_1)F_1(\xi_1)\Big)e_4,$$
где аналитические функции $F_k,\widehat{F}_k$ определены в равенствах \em(\ref{Phi-r-rozklad})\em,  \em(\ref{Phi-l-rozklad})\em.
\end{corollary}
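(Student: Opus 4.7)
The plan is to reduce the corollary to two previously established ingredients and then to a routine multiplication of the idempotent expansions, so no essentially new analysis is needed.

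First I would invoke Theorem~\ref{teor-1}: since $\Phi$ is right-$G$-monogenic and $\widehat{\Phi}$ is left-$G$-monogenic on $\Omega_\zeta$, both are $H$-monogenic there. Applying Theorem~\ref{teor-pohidna-dobutku} to each product then immediately yields that $\Phi\cdot\widehat{\Phi}$ and $\widehat{\Phi}\cdot\Phi$ are $H$-monogenic on $\Omega_\zeta$. This disposes of the first assertion without further work.

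For the explicit formulas, I would substitute the expansions (\ref{Phi-r-rozklad}) and (\ref{Phi-l-rozklad}) into the two products and expand, using the multiplication table of $\{e_1,e_2,e_3,e_4\}$ from Section~2. Because that table has only ten nonzero entries, the $16$ cross-products that arise in $\Phi\cdot\widehat{\Phi}$ (respectively $\widehat{\Phi}\cdot\Phi$) collapse to eight surviving terms which group naturally into four pairs, one landing on each basis idempotent. Reading off these pairs while tracking whether each factor depends on $\xi_1$ or on $\xi_2$ produces exactly the claimed expressions.

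The step that demands real attention is bookkeeping rather than any genuine analytical obstacle. In the $e_1$-component of $\Phi\cdot\widehat{\Phi}$ the surviving contributions $F_1(\xi_1)\widehat{F}_1(\xi_1)$ (from $e_1\cdot e_1$) and $F_3(\xi_1)\widehat{F}_4(\xi_1)$ (from $e_3\cdot e_4$) are both functions of $\xi_1$ alone, and analogously the $e_2$-component depends only on $\xi_2$; however, in the $e_3$- and $e_4$-components the two surviving terms mix the arguments $\xi_1$ and $\xi_2$, which is precisely why the product is $H$-monogenic yet need not be $G$-monogenic, and reflects the remark preceding the corollary. The analogous computation for $\widehat{\Phi}\cdot\Phi$, in which the left/right roles in the multiplication table are interchanged, produces the second displayed formula in the same fashion.
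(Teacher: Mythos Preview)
Your proposal is correct and matches the paper's own approach exactly: the paper derives the corollary in one sentence by combining Theorems~\ref{teor-pohidna-dobutku} and~\ref{teor-1} with the representations (\ref{Phi-r-rozklad}) and (\ref{Phi-l-rozklad}), and you have unpacked precisely that. (One inconsequential slip: the multiplication table has eight nonzero entries, not ten, which is why exactly eight cross-terms survive.)
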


В то же время существуют $H$-моногенные отображения, не являющиеся ни право-$G$-моногенными, ни лево-$G$-моногенными.

\begin{example}\label{example}
Отображение
$$h(\zeta)=(e^{\xi_1}+\xi_2^2)\,e_1+\xi_1\sin\xi_2\,e_2+\xi_2^2\,e_3+e^{\xi_1}\,e_4$$ является $H$-моногенным в пространстве $E_3$, но не является ни лево-$G$-моногенным, ни право-$G$-моногенным. Действительно, дифференциал этого отображения представляется в виде (\ref{--riv-1}):
$$dh=e^{\xi_1}e_1d\zeta e_1+\xi_1\cos\xi_2\,e_2\,d\zeta e_2+2\xi_2\,e_3\,d\zeta e_2+$$
$$+e^{\xi_1}\,e_4\,d\zeta e_1+2\xi_2\,e_3\,d\zeta\, e_4+ \sin\xi_2\,e_4\,d\zeta\, e_3.$$
Однако отображение $h$ не представляется ни в виде (\ref{Phi-r-rozklad}), ни в виде (\ref{Phi-l-rozklad}).
\end{example}

$H$-моногенное отображение $\Phi$, дифференциал которого представляется в виде
\begin{equation}\label{r-H-dPhi}
d\Phi=d\zeta\,\Phi'_H(\zeta)
\end{equation}
 будем называть \emph{право-$H$-моногенным}, а $H$-моногенное отображение $\widehat{\Phi}$, дифференциал которого представляется в виде
\begin{equation}\label{l-H-dPhi}
d\widehat{\Phi}=\widehat{\Phi}'_H(\zeta)d\zeta
\end{equation}
  --- \emph{лево-$H$-моногенным} в области $\Omega_\zeta$.

Установим необходимые и достаточные условия $G$-моногенности отображения.

\begin{theorem}\label{teor-2} Пусть компоненты $U_k:\Omega\rightarrow\mathbb{C}$ отображения  \em (\ref{Phi-U}) \em являются $\mathbb{R}$-дифференцируемыми в области $\Omega$.
Отображение $\Phi:\Omega_\zeta\rightarrow\mathbb{H(C)}$
является право-$G$-моногенным тогда и только тогда, когда оно ---
право-$H$-моногенное, а отображение
$\widehat{\Phi}:\Omega_\zeta\rightarrow\mathbb{H(C)}$ является
 лево-$G$-моногенным  тогда и только тогда, когда оно --- лево-$H$-моногенное.
   \end{theorem}

\begin{proof} Необходимость доказана при доказательстве теоремы \ref{teor-1}. Докажем достаточность.
Пусть отображение $\Phi$ --- право-$H$-моногенное, т.~е. выполняется равенство
(\ref{r-H-dPhi}). Следствием равенств (\ref{-riv-1}) и (\ref{r-H-dPhi}) является равенство
$$\frac{\partial \Phi}{\partial x}dx+\frac{\partial \Phi}{\partial y}dy+\frac{\partial \Phi}{\partial z}dz=d\zeta\Phi'_H(\zeta).
$$
С учетом выражений $\Phi'_H(\zeta)=\frac{\partial \Phi}{\partial x}$ и $d\zeta=dx+i_2dy+i_3dz$ имеем тождество
$$\frac{\partial \Phi}{\partial x}dx+\frac{\partial \Phi}{\partial y}dy+\frac{\partial \Phi}{\partial z}dz=\frac{\partial \Phi}{\partial x}dx+i_2\frac{\partial \Phi}{\partial x}dy+i_3\frac{\partial \Phi}{\partial x}dz,$$
следствием которого являются условия Коши -- Римана (\ref{umova-r-K-R}). Тогда по теореме 1 из
\cite{Shpakivskiy-Kuzmenko} отображение $\Phi$ --- право-$G$-моногенное.

Аналогично рассматривается случай лево-$H$-моногенного отображения.
Теорема доказана.
\end{proof}

Из теоремы \ref{teor-2} и теоремы 5 из \cite{Shpakivskiy-Kuzmenko} вытекает

\begin{corollary}
Если область $\Omega\subset
\mathbb{R}^{3}$ является выпуклой в направлении прямых $L^1$, $L^2$ и $f_1(E_3)=f_2(E_3)=\mathbb{C}$, то
каждое право-$H$-моногенное отображение $\Phi:\Omega_\zeta\rightarrow\mathbb{H(C)}$
представляется в виде \em(\ref{Phi-r-rozklad})\em и
каждое лево-$H$-моногенное отображение $\widehat{\Phi}:\Omega_\zeta\rightarrow\mathbb{H(C)}$ представляется в виде \em(\ref{Phi-l-rozklad})\em.
\end{corollary}

Следующая теорема содержит критерии право-$G$-моногенности и лево-$G$-моногенности отображений.

\begin{theorem}\label{krit-G-monog}
Отображение $\Phi:\Omega_\zeta\rightarrow\mathbb{H(C)}$  \big(или $\widehat{\Phi}:\Omega_\zeta\rightarrow\mathbb{H(C)}$\big) является право-$G$-моногенным \big(или лево-$G$-моногенным\big) в области $\Omega_\zeta\subset E_3$ тогда и только тогда, когда выполняется одно из следующих условий:

\em(I)\em\, компоненты $U_k:\Omega\rightarrow\mathbb{C}$ разложения \em (\ref{Phi-U}) \em являются $\mathbb{R}$-дифференцируемыми в области $\Omega_\zeta$ и выполняются условия \em(\ref{umova-r-K-R}) \em \big(или  \em(\ref{umova-l-K-R})\em\big) в каждой точке области $\Omega_\zeta$;

\em(II)\em\, компоненты $U_k:\Omega\rightarrow\mathbb{C}$ разложения \em (\ref{Phi-U}) \em являются $\mathbb{R}$-дифференцируемыми в области $\Omega_\zeta$ и отображение $\Phi$ \big(или $\widehat{\Phi}$\big) --- право-$H$-моногенное \big(или лево-$H$-моногенное\big) в $\Omega_\zeta$.

Если $f_1(E_3)=f_2(E_3)=\mathbb{C}$, то отображение $\Phi$ является право-$G$-моногенным \big(или $\widehat{\Phi}$ --- лево-$G$-моногенным\big) тогда и только тогда, когда выполняется одно из условий:

\em(IIІ)\em\, для каждой точки $\zeta_0\in\Omega_\zeta$ найдется окрестность, в которой отображение $\Phi$  \big(или $\widehat{\Phi}$\big)  разлагается в степенной ряд  \em(\ref{r-Taylor}) \em \big(или  \em(\ref{l-Taylor})\em\big);

\em(ІV)\em\, отображение $\Phi$ \big(или $\widehat{\Phi}$\big) непрерывно и удовлетворяет равенству \em(\ref{int-morera-r})\em\, \big(или  \em(\ref{int-morera-l})\em\big) для каждого треугольника $\Delta_\zeta$ такого, что $\overline{\Delta_\zeta}\subset\Omega_\zeta$.

Если $f_1(E_3)=f_2(E_3)=\mathbb{C}$ и, кроме того, область $\Omega\subset
\mathbb{R}^{3}$ является выпуклой в направлениии прямых $L^1$, $L^2$, то отображение $\Phi$ \big(или $\widehat{\Phi}$\big) --- право-$G$-моногенное \big(или $\widehat{\Phi}$ --- лево-$G$-моногенное\big) тогда и только тогда, когда

\em(V)\em\, существуют единственные аналитические в области $D_1:=\{\xi_1=x+a_1y+b_1z:(x,y,z)\in\Omega\}$
функции $F_1,\,F_3$ \big(или $\widehat{F}_1,\,\widehat{F}_4$\big) и единственные
аналитические в области $D_2:=\{\xi_2=x+a_2y+b_2z:(x,y,z)\in\Omega\}$
функции $F_2,\,F_4$ \big(или $\widehat{F}_2,\,\widehat{F}_3$\big) такие, что в области $\Omega_\zeta$ отображение $\Phi$ \big(или $\widehat{\Phi}$\big) представляется в виде \em(\ref{Phi-r-rozklad})\,\,\em \big(или \em(\ref{Phi-l-rozklad})\em\big).
\end{theorem}

\begin{proof}
Эквивалентность условия (I) и свойства право-$G$-моногенности установлена в
теореме 1 из \cite{Shpakivskiy-Kuzmenko}. Эквивалентность условия (II) и право-$G$-моногенности установлена в теореме \ref{teor-2}. Эквивалентность условия (ІІІ) и право-$G$-моногенности  вытекает из теоремы 1 работы \cite{Kuzm-series} и свойства сходящегося ряда (\ref{r-Taylor}) определять функцию, право-$G$-моногенную в шаре сходимости. Эквивалентность условия (ІV) и право-$G$-моногенности вытекает из теоремы \ref{teor-Morera} и теоремы 2 работы \cite{Shp-Kuzm-intteor}.

Наконец, для доказательства эквивалентности условия (V) и право-$G$-моногенности  отображения $\Phi$ достаточно заметить, что отображение (\ref{Phi-r-rozklad}) является право-$G$-моногенным в $\Omega_\zeta$, а единственность функций $F_1,\,F_2,\,F_3,\,F_4$  из (\ref{Phi-r-rozklad}) следует из единственности разложения элемента алгебры $\mathbb{H(C)}$ по базису $\{e_1,e_2,e_3,e_4\}$. В случае лево-$G$-моногенного отображения теорема доказывается аналогично.
Теорема доказана.
 \end{proof}

\vskip 2mm
\textbf{Благодарности.} Авторы признательны профессору С.\,А.~Плаксе за ценные советы, которые способствовали улучшению работы.

\bigskip

СВЕДЕНИЯ ОБ АВТОРАХ

\medskip
Виталий Станиславович Шпаковский\\
Институт математики НАН Украины,\\
ул. Терещенковская, 3, Киев, Украина\\
shpakivskyi86@gmail.com

\medskip
Татьяна Сергеевна Кузьменко\\
Институт математики НАН Украины,\\
ул. Терещенковская, 3, Киев, Украина\\
kuzmenko.ts15@gmail.com
\end{document}